\def\input@path{{figures/}}\makeatother
\newtheorem*{theorem*}{Theorem}%[section]
\theoremstyle{definition}
\crefname{equation}{Equation}{Equations}
\newcommand{\set}[2]{\left\{ #1 \;\middle|\; #2 \right\}} % set notation
\newcommand{\ie}{\textit{i.e.}~} % id est
\newcommand{\eg}{\textit{e.g.}~} % exempli gratia
\definecolor{darkblue}{rgb}{0,0,0.7} % darkblue color
\definecolor{green}{RGB}{57,181,74} % green color
\definecolor{violet}{RGB}{147,39,143} % violet color
\newcommand{\darkblue}{\color{darkblue}} % darkblue command
\newcommand{\defn}[1]{\textsl{\darkblue #1}} % emphasis of a definition
\newcommand{\OEIS}[1]{\cite[{\rm \href{http://oeis.org/#1}{\texttt{#1}}}]{OEIS}}
\title[Plumbing bijections]{\includegraphics{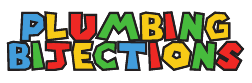}}
\thanks{
Partially supported by the Spanish project PID2022-137283NB-C21 of MCIN/AEI/10.13039/501100011033 / FEDER, UE, by the Spanish--German project COMPOTE (AEI PCI2024-155081-2 \& DFG 541393733), by the Severo Ochoa and María de Maeztu Program for Centers and Units of Excellence in R\&D (CEX2020-001084-M), by the Departament de Recerca i Universitats de la Generalitat de Catalunya (2021 SGR 00697), and by the French--Austrian project PAGCAP (ANR-21-CE48-0020 \& FWF I 5788).
}
\author{Vincent Pilaud}
\address[Vincent Pilaud]{Universitat de Barcelona \& Centre de Recerca Matemàtica, Barcelona, Spain}
\email{vincent.pilaud@ub.edu}
\urladdr{\url{https://www.ub.edu/comb/vincentpilaud/}}
\begin{document}

\begin{abstract}
The legendary Mario and Luigi show us that whether you slap in the crossings as early as a warp pipe can shoot you or as late as the very last bend, the water system in Yoshi Hill comes out exactly the same!
\end{abstract}

\vspace*{-2cm}
\maketitle

%%%%%%%%%%%%%%%%%%%%%%%%%%%%%%%%%%%%%%%

%\section{Water Emergency at Yoshi Hill!}
\section*{\centerline{\includegraphics[scale=.8]{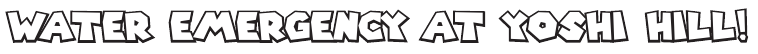}}}

\textsl{``Mamma mia!''} cry Mario and Luigi as they zoom to the scene. The Monty Moles have totally wrecked the pipes, leaving geysers and puddles everywhere! (see~\cref{fig:YoshiHill1} left).

\begin{figure}[h]
	\centerline{\includegraphics[scale=.14]{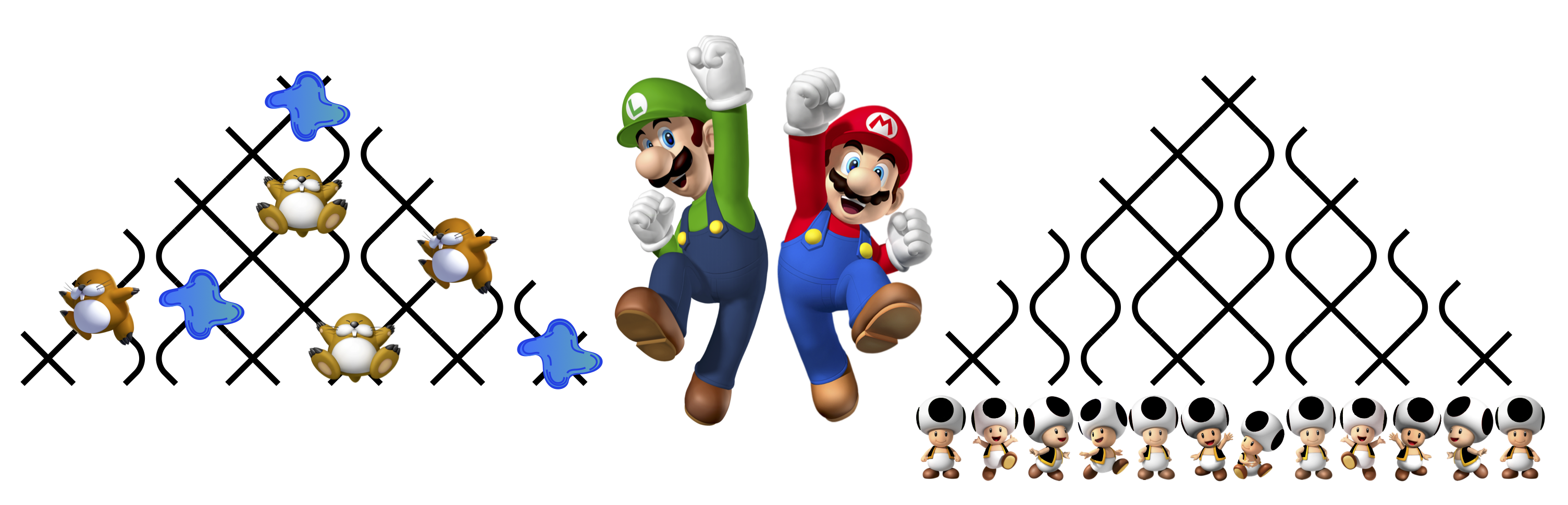}}
	\caption{Water emergency at Yoshi Hill (left). Mario and Luigi's fix (right). But which Toad is plugged into which water supply?}
	\label{fig:YoshiHill1}
\end{figure}

Our famous plumbers jump into action, snapping elbowed or crossing pipes into every mole-dug hole.
Pipes twist, water splashes, and the residents cheer as the chaos subsides… until they see their water bills!
Donkey Kong ends up paying for Yoshi, Yoshi for Princess Peach, and so on.
The Mushroom Water Company dispatches a Toad repair squad to every home, but the whole situation remains a twisted tangle of pipes and headaches (see~\cref{fig:YoshiHill1} right).
Mario recalls: \textsl{``I put that elbow at the intersection of the first uphill street with the third downhill avenue!''}
Luigi objects: \textsl{``Nope! At the intersection of the third uphill street with the first downhill avenue! Guaranteed!''}
Reality check: their memories are as twisted as the pipes.

The big boss of the Mushroom Water Company blows his top and yells like a Bob-omb~about~to explode:
\textsl{``Who should get these bills?''}
Mario and Luigi exchange glances and answer:
\textsl{``Easy!~The $i$th pipe from the left (resp.~right) clearly reaches one of the first (resp.~last) $2i$ customers.''}
Toadsworth pops up, sweating under his mushroom cap: \textsl{``Whoa! But there are 198\,272 possibilities!''}

Deep breath. Toadsworth bites his lip, picks one option… and prays the water flows to the right homes.
Honestly, it would’ve been way simpler to just flush some colored dye down each pipe! (see~\cref{fig:YoshiHill2} left).

\begin{figure}
	\centerline{\includegraphics[scale=.14]{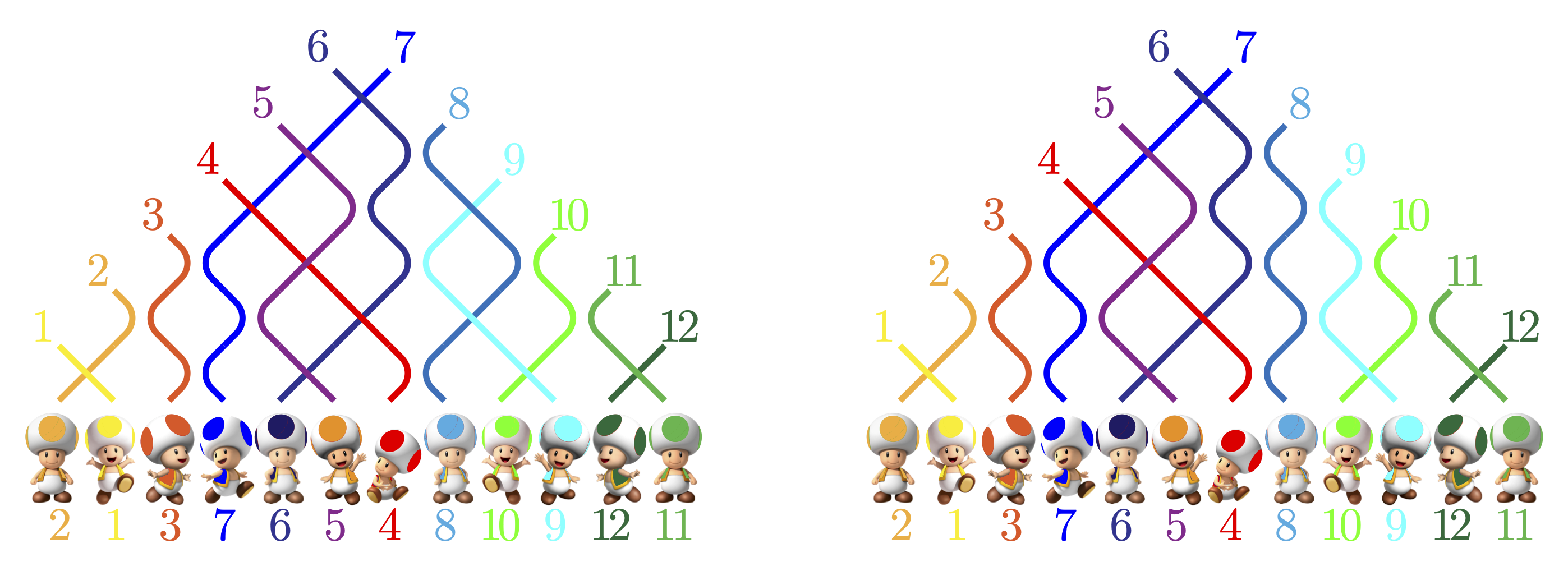}}
	\caption{The new pipe flow in Yoshi Hill (left), and a tidier solution with fewer crossings (right).}
	\label{fig:YoshiHill2}
\end{figure}

Too bad! Mario and Luigi zoom back to Yoshi Hill to rebuild the pipes so they match the configuration Toadsworth picked.
Very quickly, they both realize it’s doable starting from the faucets and installing elbows and crossings until the water reaches all the customers. Since crossings are trickier to install than elbows, they agree to use as few crossings as possible (see~\cref{fig:YoshiHill2} right).
But even with this rule, a whole bunch of options pop up like Piranha Flowers!
And, as always, our heroic duo can’t help picking totally different strategies:
Motivated Mario wants to plant the crossings right at the start, while Lazy Luigi prefers to leave them for the very last twist of the pipes!
(see~\cref{fig:YoshiHill3}).

Being the best of brothers, they heatedly debate their approach:
\textsl{``In my plan,'' }says Mario, puffing up his chest, \textsl{``I would never put a crossing between two pipes that have already brushed elbows. And if a diamond fits in Yoshi Hill, I’ll never place elbows at its left and right vertices without putting one at the bottom vertex!''}
\textsl{``In mine,''} Luigi retorts, scratching his head under his cap, \textsl{``I would never put a crossing between two pipes that will brush elbows later. And I will never place elbows at the left and right vertices of your diamond without adding one at the top vertex!''}

\begin{figure}[b]
	\centerline{\includegraphics[scale=.14]{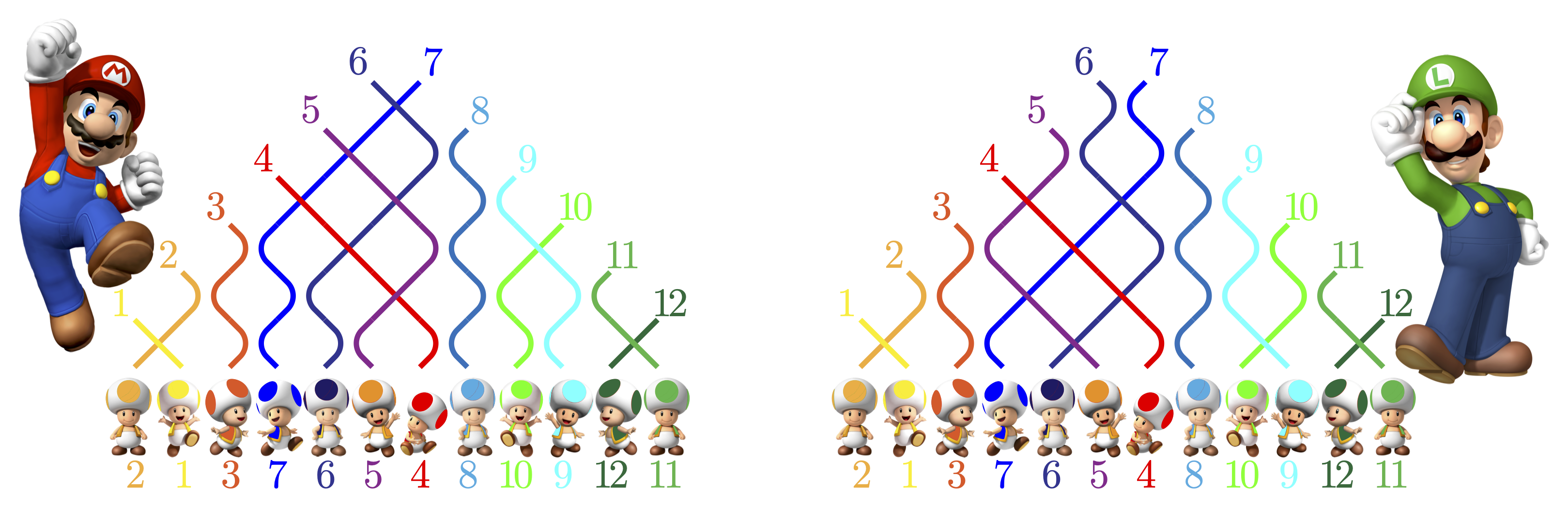}}
	\caption{Mario's (left) and Luigi's (right) solutions for rebuilding the pipe system of Yoshi Hill.}
	\label{fig:YoshiHill3}
\end{figure}

Toadsworth puts an end to the argument.
\textsl{``The important thing,''} he says, wiggling his mushroom cap, \textsl{``is that everyone finds the pipe that matches their bill.''}
As he’s really packing power in his mushroom, he adds thoughtfully:
\textsl{``Actually, what Mario and Luigi have taught us is that there are plumbing bijections between bottom diamond closed pipe configurations, top diamond closed pipe configurations, and Yoshi bill assignments.''}

Professor E. Gadd, always eager to turn simple things into complicated lectures, adjusts his glasses and declares in his grand, scholarly voice:
\textsl{``Toadsworth, if we encode a pipe configuration as the pairs (a,b) where an elbow sits at the intersection of the $a$th uphill street with the $b$th downhill avenue, and we encode a bill assignment as the permutation~$\sigma$ of~$[n]$ such that the $i$th bill goes to the $\sigma(i)$th customer, our plumbers suggest (size preserving) bijections between...
\begin{enumerate}[(a)]
\item \defn{intersection closed collections} of subintervals of~$[n]$ (\ie subsets~$X$ of~$\set{(a,b)}{1 \le a \le b \le n}$ such that $[a,c] \in X$ and~$[b,d] \in X$ implies~$[b,c] \in X$ for all~$1 \le a \le b \le c \le d$),
\item \defn{union closed collections} of subintervals of~$[n]$ (\ie subsets~$X$ of~$\set{(a,b)}{1 \le a \le b \le n}$ such that $[a,c] \in X$ and~$[b,d] \in X$ implies~$[a,d] \in X$ for all~$1 \le a \le b \le c \le d$),
\item \defn{Yoshi permutations} of~$[2n]$ (\ie with $\sigma(i) \le 2i$ and~$\sigma(2n-i+1) \ge 2(n-i)+1$ for~$i \in [n]$),
\end{enumerate}
which are famously counted by median Genocchi numbers!''} \OEIS{A005439} (Up to appropriate compositions, the Yoshi permutations are the permutations~$\tau$ of~$[2n]$ with~$\tau(2i-1) \ge 2i-1$ and~$\tau(2i) \le 2i$ for all~$i \in [n]$, which were considered in~\cite{DumontRandrianarivony}.)

And, true to his habit of turning a molehill into a mountain, he keeps thinking out loud: \textsl{``Of course, the same plumbing techniques obviously give (size preserving) bijections between...
\begin{enumerate}[(a)]
\item \defn{crossing internally closed} graphs on~$[n]$ (\ie such that~$(a,c) \in G$ and~$(b,d) \in G$ implies~$(b,c) \in G$ for all~$1 \le a < b < c < d \le n$),
\item \defn{crossing externally closed} graphs on~$[n]$ (\ie such that~$(a,c) \in G$ and~$(b,d) \in G$ implies~$(a,d) \in G$ for all~$1 \le a < b < c < d \le n$),
\item \defn{Yoshi derangements} of~$[2n]$ (\ie with $\sigma(i) < 2i$ and~$\sigma(2n-i+1) > 2(n-i)+1$ for~$i \in [n]$),
\end{enumerate}
which are again counted by median Genocchi numbers!''} % \OEIS{A005439}.
But he is suddenly interrupted in the middle of his wild musings...

%%%%%%%%%%%%%%%%%%%%%%%%%%%%%%%%%%%%%%%

%\section{Water Emergency at Plumbopolis!}
\section*{\centerline{\includegraphics[scale=.8]{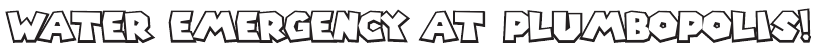}}}

\enlargethispage{.2cm}
The exploits of Mario and Luigi have certainly made them famous!
They zoom off to Plumbopolis, and once again, chaos reigns.

They ask the townsfolk for a map, and the residents reply:
\textsl{``I'm always driving with my head on the wheel, I have no clue what the city actually looks like!''}
After gathering a few scattered descriptions, Mario and Luigi finally piece it together: \textsl{``Hmm… looks like Plumbopolis is a staircase polyomino, with a leftmost and a rightmost corner linked by two step-like paths.''} (See \cref{fig:plumbopolis}.)

\begin{figure}[h]
	\centerline{\includegraphics[scale=1.8]{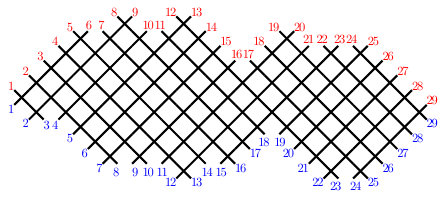}}
	\caption{Plumbopolis is just like any staircase polyomino city. It defines a permutation~$\tau$ of~$[29]$ obtained by following the uphill streets or downhill avenues, \eg~$\tau({\color{red} 5}) = {\color{blue} 14}$ and~$\tau({\color{red} 15}) = {\color{blue} 7}$.}
	\label{fig:plumbopolis}
\end{figure}

Again, Toadsworth grabs a bill assignment at random and cries out for Mario and Luigi to straighten the pipes!
Just like before, the brothers go head-to-head, each defending their own strategy to fix the water system.
Long-story short, you certainly know what Mario and Luigi have discovered:

\begin{theorem*}
Consider any staircase polyomino city with $m$ faucets and $m$ custumers, and let~$\tau$ be the permutation of~$[m]$ such that the grid water system (meaning only crossings) sends the water of the $i$th faucet to the $\tau(i)$th custumer.
Then plumbing provides (size preserving) bijections between
\begin{enumerate}[(a)]
\item subsets~$S$ of road intersections such that if~$S$ contains the left and right vertices of a diamond that fits in the city, then~$S$ contains also its bottom vertex,
\item subsets~$S$ of road intersections such that if~$S$ contains the left and right vertices of a diamond that fits in the city, then~$S$ contains also its top vertex,
\item permutations of~$[m]$ lower than~$\tau$ in (strong) Bruhat order (recall that~$\sigma \le \tau$ if~$\sigma$ admits a reduced expression which is a subword of a reduced expression for~$\tau$).
\end{enumerate}
\end{theorem*}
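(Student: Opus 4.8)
The plan is to recognize this statement through the combinatorics of pipe dreams and subword complexes, and then to single out the configurations of types (a) and (b) as canonical (``greedy'') representatives. First I would fix the standard encoding: reading the cells of the staircase city along its anti-diagonals gives a word $Q=(q_1,\dots,q_N)$ in the simple transpositions $s_1,\dots,s_{m-1}$ of the symmetric group on $[m]$ (a cell on the $k$th anti-diagonal contributes $s_k$), with the feature that the routing of a configuration is the product $\pi(C)$ of the crossing letters read in this order. So a subset $S$ of road intersections — the elbows — is the same datum as the crossing subword $C=Q\setminus S$ of $Q$, and a diamond that fits in the city is a $2\times2$ block of cells lying inside the polyomino; conditions (a) and (b) thus become local conditions on such blocks.

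Next comes the key structural fact, which I will call the \emph{reducedness lemma}: for a staircase city the grid (all-crossings) configuration $Q$ is already reduced, because in it every pipe runs straight along a full row or a full column of the polyomino, so any two pipes meet at most once. Consequently $\tau=\pi(Q)$ has $\ell(\tau)=N$, \emph{every} subword of $Q$ is a reduced expression, and \emph{every} configuration is a reduced pipe dream. Combined with the subword characterization of Bruhat order — a permutation lies in $[e,\tau]$ exactly when one of its reduced expressions is a subword of $Q$ — this identifies the set of realizable routings with $[e,\tau]$: on the one hand $C=Q\setminus S$ is a reduced expression for $\pi(C)$ and a subword of $Q$, so $\pi(C)\le\tau$; on the other hand every $\sigma\le\tau$ has a reduced expression which is a subword of $Q$ and hence is realized. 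So (c) is precisely the set of routings, and all that remains is to show that each of (a) and (b) contains exactly one configuration realizing each $\sigma\le\tau$.

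For this I bring in flips. For fixed $\sigma\le\tau$ the configurations realizing $\sigma$ are the facets of the subword complex $\Delta(Q,\sigma)$, which by Knutson--Miller are connected through flips; here a flip is an elementary chute or ladder move, a routing-preserving local move that transfers a crossing across a single diamond, toggling the two intersections on one diagonal of the $2\times2$ block against the two on the other. Call a flip \emph{descending} when it moves the crossing towards the faucets (equivalently, makes the sorted list of crossing positions lexicographically smaller in the reading order), and \emph{ascending} otherwise. A violation of (a) — a diamond with elbows at its left and right vertices but a crossing at its bottom vertex — is precisely a spot where a descending flip is available, and symmetrically (b) is the absence of ascending flips. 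Hence the type-(a) (resp.\ type-(b)) configurations are the sources (resp.\ sinks) of the descending-flip relation, one in each flip-connected component, \ie one for each $\sigma\le\tau$ — \emph{provided} that each component has a single source and a single sink.

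That proviso is the crux, and the step I expect to be the main obstacle: one must prove that the rewriting system ``apply descending diamond flips'' is terminating and confluent, so that Mario's greedy rule — install crossings as early as possible — yields a well-defined normal form. Termination is clear, since the sum of the reading positions of the crossings strictly drops along descending flips. For confluence I would run a diamond-lemma local analysis: two diamonds supporting descending flips can overlap only along an edge or at a vertex, and a finite case check closes up the resulting pair of configurations by further descending flips, after which Newman's lemma gives global confluence. The cleanest packaging is probably to show, by induction on the reading word, that the lexicographically first reduced subword of $Q$ with product $\sigma$ is exactly the one with no descending diamond move — peeling off the last cell (an outer corner of the polyomino) changes $Q$ by a single letter and touches at most one diamond, keeping the bookkeeping local; this is the subword-complex shellability / greedy-facet mechanism. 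With the normal form unique, (a), (b) and $[e,\tau]$ are in bijection; the bijection between (a) and (b) is the explicit ``flip everything upward'' map carrying the greedy configuration for $\sigma$ to the anti-greedy one, and the bijections with (c) just record the routing, the reducedness lemma ensuring that every $\sigma\le\tau$ is attained.
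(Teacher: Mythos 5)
Your overall route is the same as the paper's: read the city along antidiagonals as a word $Q$ in simple transpositions, identify (c) with the realizable routings via subword complexes, and single out the configurations of (a) and (b) as greedy and antigreedy representatives. The encoding, the observation that the all-crossing word $Q$ is reduced (every pipe runs straight, so two pipes meet at most once), and the resulting identification of (c) with the routings are all fine. But two of your intermediate claims are false, and both carry weight. First, ``every subword of $Q$ is a reduced expression'' does not follow from $Q$ being reduced: already for the $2\times 2$ square city one has $Q=s_2s_1s_3s_2$, whose subword formed by the top and bottom cells is $s_2s_2$ --- concretely, two pipes can perfectly well cross twice, which is the whole reason Mario and Luigi insist on using as few crossings as possible. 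Hence the configurations realizing a given $\sigma$ are \emph{not} all facets of $SC(Q,\sigma)$, and before any flip-connectivity argument can reach a subset satisfying (a) or (b) you must first prove that such a subset is a \emph{reduced} configuration; you never address this. Second, the diamonds in (a) and (b) are not unit $2\times2$ blocks: E.~Gadd's translation imposes the closure condition for \emph{all} $1\le a\le b\le c\le d$, i.e.\ for tilted rectangles of arbitrary size. The unit-block condition is strictly weaker --- in Yoshi Hill with $n=4$ the elbow set $\{[1,3],[3,4]\}$ has no unit-diamond violation yet misses the intersection $[3,3]$ --- so the ``locally closed'' subsets form a strictly larger class that cannot also be in bijection with $[e,\tau]$. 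Correspondingly, a subword-complex flip exchanges two cells traversed by the same pair of pipes, which may be far apart (already in the $2\times2$ city the two facets of $SC(Q,s_2)$ differ in the top and bottom cells only), so your rewriting system by local $2\times2$ moves matches neither the conditions (a)/(b) nor the actual flip structure.

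Finally, the step you yourself flag as the crux --- termination plus confluence, giving a unique source and a unique sink per flip component --- is exactly the existence and uniqueness of the greedy and antigreedy facets, which the paper does not reprove but imports from the greedy-flip-tree and EL-labeling references; your Newman's-lemma sketch is not carried out, and as set up (local moves only, all subwords assumed reduced) it would be analyzing the wrong objects. To repair the argument along your lines you would need to (i) show that any subset satisfying (a) (resp.\ (b)) is the complement of a reduced subword of $Q$, (ii) work with genuine subword-complex flips and diamonds of all sizes, and (iii) either prove confluence honestly or, as the paper does, quote the uniqueness of the greedy and antigreedy facets and check that they are characterized by conditions (a) and (b).
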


\begin{proof}
The water system defines a word~$Q$ on simple transpositions, whose Demazure product is~$\tau$.
For any permutation~$\sigma$ below~$\tau$ in Bruhat order, the corresponding subword complex~$SC(Q,\sigma)$ (see~\cite{KnutsonMiller-GroebnerGeometry,KnutsonMiller-subwordComplex}) is thus non-empty and admits a unique greedy facet and a unique antigreedy facet (see~\cite{Pilaud-greedyFlipTree,PilaudStump-ELlabeling}).
It is elementary to check that the greedy facet satisfies~(i) while the antigreedy facet satisfies (ii).
\end{proof}

\begin{figure}[b]
	\centerline{\includegraphics[scale=.09]{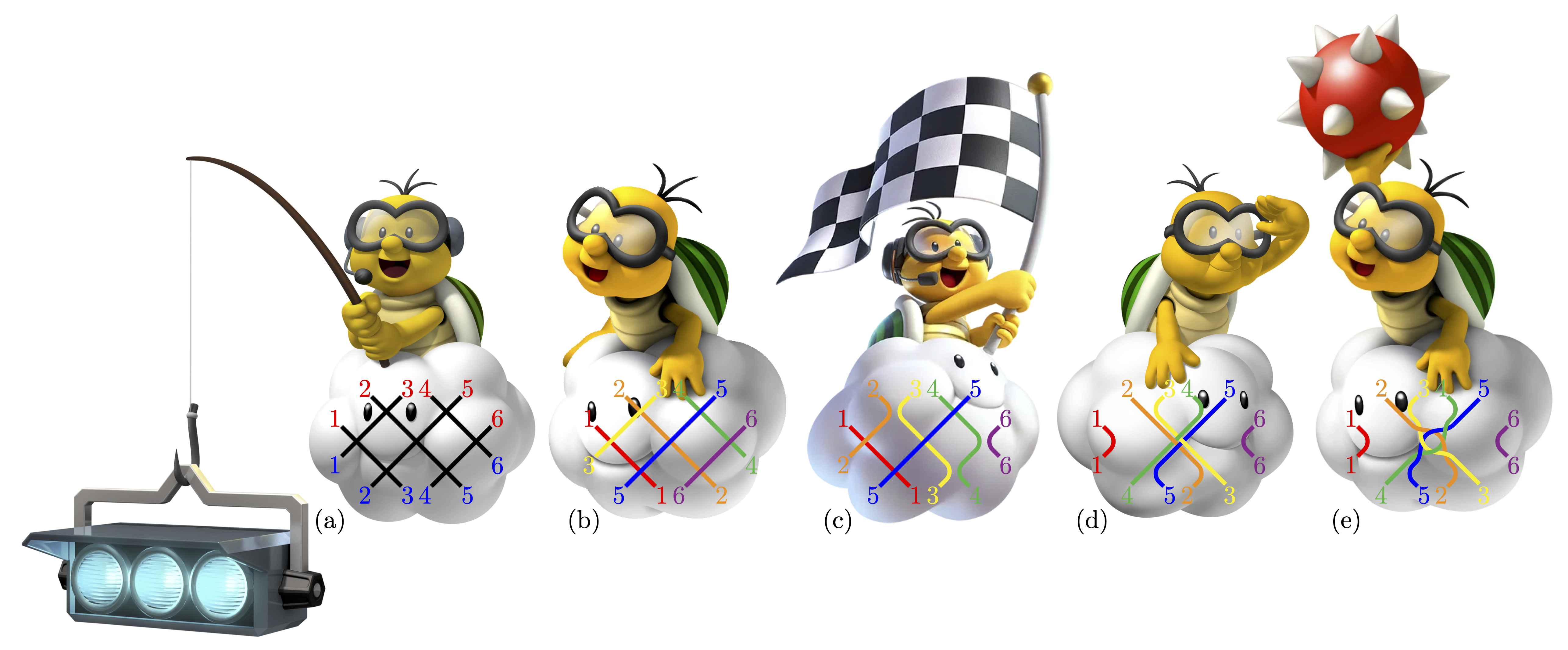}}
	\caption{Lakitu Island. Here, we have~$\tau = 351624$, $\mu = 111224$ and $\nu = 355666$. The two permutations~$\sigma = 351624$ and~$\sigma' = 145236$ both satisfies~$\mu(i) \le \sigma(i) \le \nu(i)$ for all~$i \in [6]$, but only~$\sigma$ is below~$\tau$ in (strong) Bruhat order.}
	\label{fig:lakituIsland}
\end{figure}

\enlargethispage{.5cm}
It is clear that any permutation~$\sigma$ of~(c) satisfies that~$\mu(i) \le \sigma(i) \le \nu(i)$ for all~$i \in [m]$, where~$\mu(i)$ and~$\nu(i)$ are the leftmost and rightmost customers reachable from the $i$th faucet (for example~${\mu({\color{red} 19}) = {\color{blue} 11}}$ and~$\nu({\color{red} 19}) = {\color{blue} 27}$ in \cref{fig:plumbopolis}).

The reciprocal statement does not hold in general.
Luckily, Toadsworth has hidden talents, and he already knew about this subtlety thanks to last year’s disasters on Lakitu Island, where $\tau = 351624$, $\mu = 111224$ and $\nu = 355666$ (see \cref{fig:lakituIsland}~(a~\&~b)).
Back then, he had asked Mario and Luigi to handle several bill assignments. Our two plumbers managed to pull it off for the permutation~$\sigma = 351624$ (see \cref{fig:lakituIsland} (c)), but they eventually ended up with a full plate of pipe-spaghetti for the permutation~$\sigma' = 145236$ (see \cref{fig:lakituIsland}~(d~\&~e)).
Both permutations~$\sigma$ and~$\sigma'$ satisfies~$\mu(i) \le \sigma(i) \le \nu(i)$ for all~$i \in [6]$, but only~$\sigma$ is below~$\tau$ in (strong) Bruhat order.

Using that~${\sigma \le \tau}$ in Bruhat order if and only if ${\#\set{k \le i}{\sigma(k) \le j} \ge \#\set{k \le i}{\tau(k) \le j}}$ for all~${i,j \in [n]}$, it is easy to check that the reciprocal statement does hold as soon as the top path has a unique peak (as in Yoshi hill) or the bottom path has a unique valley.

%%%%%%%%%%%%%%%%%%%%%%%%%%%%%%%%%%%%%%%

\addtocontents{toc}{\vspace{.1cm}}
\section*{Notes and acknowledgments}

The crossing internally closed graphs (a) are also known as \defn{grounded rectangle graphs}~\cite{JelinekTopfer}, \defn{hook graphs}~\cite{Hixon}, \defn{max point-tolerance graphs}~\cite{CatanzaroChaplickFelsnerHalldorssonHalldorssonHixonStacho}, \defn{$p$-box graphs}~\cite{SotoCaro}, or \defn{non-jumping graphs}~\cite{AshurFiltserSababn}, while the crossing externally closed graphs (b) are also known as \defn{terrain-like graphs}~\cite{FroeseRenken-algo,FroeseRenken,AshurFiltserSababn}.
A similar bijection from (b) to Dumont derangements (c) was presented in~\cite{FroeseRenken} (but definitely more twisted than a Mario pipe maze!), but I am not aware that the bijection from (a) to either (b) or (c) was observed earlier, even if the two classes of graphs (a) and (b) were compared in~\cite{AshurFiltserSababn}.

The sequence \OEIS{A005439} appears twice in the context of interval hypergraphic polytopes~\cite{BergeronPilaud}: it counts the interval hypergraphs whose hypergraphic polytope is a nestohedron (closed under union), and the interval hypergraphs whose hypergraphic poset is a lattice (closed under intersection).
I am truly grateful to Andrew Sack and to Félix Gélinas respectively for bringing up this ``coincidence'' to my attention.
I would like to extend my sincere thanks to all members of the project ``Hypergraphic polytopes'' (Jose Bastidas, Félix Gélinas, Germain Poullot, Andrew Sack, Eleni Tzanaki) of the Workshop on Lattice Theory held in the Banff International Research Station in January 2025, and to all members of the project ``Exploring hypergraphic polytopes'' (Egor Bakaev, Julie Curtis, Félix Gélinas, Leonardo Saud Maia Leite, Juan Luis Valerdi, Yirong Yang) of the Intensive Research Program Combinatorial Geometries and Geometric Combinatorics held at the Centre de Recerca Matemàtica of Barcelona in October-November 2025.

I am also grateful to Federico Ardila, Jean Cardinal, and Christian Stump for discussions which happened at the CIRM research school ``Beyond Permutahedra and associahedra'' in December 2025.

I thank Germain Poullot for generously sharing his Mario expertise, helping me navigate the twists and turns like a pro plumber.
I am indebted to ChatGPT for providing Mario-style english phrasing, turning pipes, jumps, and warp zones into words on the page.

%%%%%%%%%%%%%%%%%%%%%%%%%%%%%%%%%%%%%%%

\bibliographystyle{alpha}
\bibliography{MarioLuigi}
\label{sec:biblio}

\end{document}